\begin{document}

\title{A Poincar\'e Inequality on Loop Spaces}
\author{Xin Chen,  Xue-Mei Li and Bo Wu\\
Mathemtics Institute\\
University of Warwick\\
Coventry CV4 7AL, U.K.}
\date{\today}
\maketitle

\newcommand{\A}{{\bf \mathcal A}}
\def\a{{{\alpha }}}
\newcommand{\B}{{ \bf \mathcal B }}
\newcommand{\C}{{\mathcal C}}
\def\capa{{\mathop {Cap}}}
\def\cov{{\mathop {\rm \Gamma}}}
\newcommand{\D}{{\rm I \! D}}
\newcommand{\E}{{\bf E}}
\def\Cyl{{\mathop {Cyl}}}
\def\Ent{{{\bf Ent}}}
\newcommand{\F}{{\mathcal F}}
\newcommand{\G}{{\mathcal G}}
\newcommand{\K}{{\mathcal K}}
\newcommand{\p}{{\bf P}}
\newcommand{\R}{{\bf  R}}
\newcommand{\HH}{{\mathcal H}}
\newcommand{\I}{{\mathcal I}}
\newcommand{\norm}{{ \rule{0.2ex}{2ex} }}
\def\ent{{\mathop {\bf Ent}}}
\def\Lo{{\mathcal L}}
\def\Ric{{\mathop {Ric}}}
\def\Hess{{\mathop {Hess}}}
\def\n#1{|\kern-0.24em|\kern-0.24em|#1|\kern-0.24em|\kern-0.24em|}
\def\paral{/\kern-0.3em/}
\def\parals_#1{/\kern-0.3em/_{\!#1}}
\def\1{{\mathop {\bf 1}}}
\def\ie{{\rm i.e.}}
\def\var{{{\rm \bf Var}}}
\def\osc{{{\bf Osc}}}
\renewcommand{\thefootnote}{}
\makeatletter
\def\sqr#1#2{%
	\fboxsep0pt%
	\fboxrule#2pt%
	\fbox{\hbox to #1pt{\hss\vbox to #1pt{\vss}}}}
\def\qed{\unskip\nobreak\hfil\penalty50\hskip2em\hbox{}\nobreak
      \hfil\sqr{5.5}{0.2}\parfillskip=\z@ \finalhyphendemerits=0 \par}
\newenvironment{proof}[1][ ]{\ifthenelse{\equal{#1}{ }}{\def\MH@pfName{.}}{\def\MH@pfName{ #1.}}%
	\trivlist\item[\hskip\labelsep{\it Proof\MH@pfName}]\ignorespaces}{\phantom{a}\nobreak\qed\endtrivlist}
\makeatother

\newtheorem{theorem}{Theorem}[section] \newtheorem{proposition}[theorem]{%
Proposition} \newtheorem{lemma}[theorem]{Lemma}
 \newtheorem{corollary}[theorem]{Corollary} \newtheorem{definition}{Definition}[section]
\newtheorem{remark}[theorem]{Remark}
\newtheorem{assumption}[theorem]{Assumption}

\begin{abstract}
We investigate properties of measures in infinite dimensional spaces in terms of Poincar\'e inequalities. 
 A Poincar\'e inequality states that the $L^2$ variance of an admissible  function is controlled by the  homogeneous $H^1$ norm.  In the case of Loop spaces, it was observed by L. Gross \cite{Gross91}  that the homogeneous $H^1$ norm alone may not control the $L^2$ norm and  a potential term involving the end value of the Brownian bridge is introduced.  Aida, on the other hand, introduced a weight on the Dirichlet form.  We show that Aida's modified Logarithmic Sobolev inequality implies weak Logarithmic Sobolev Inequalities and weak Poincar\'e inequalities with precise estimates on the order of convergence. The order of convergence in the weak Sobolev inequalities are related to weak $L^1$ estimates on the weight function. This and a relation between Logarithmic Sobolev inequalities and weak Poincar\'e inequalities lead  to a Poincar\'e inequality on the loop space over certain manifolds.
 
 \end{abstract}

\section{Introduction}
A Poincar\'e inequality is of the form
$$\int_N (f-\bar f)^2 \mu(dx) \leqslant {1\over C}\int_N |\nabla f|^2 \mu(dx),$$
where $f$ ranges through an admissible set of real valued functions on a space $N$, $\nabla$ is a gradient type operator,  
$\mu$ a finite measure on $N$ and hence is often normalised to have total mass $1$, and $\bar f=\int f d\mu$.  For $N=[0,L]$, $\mu$ the normalised Lebesgue measure, the constant $C$ is $4\pi^2\over L^2$  for $C^1$ functions satisfying the Dirichlet boundary or  the periodic boundary conditions. More generally if  $N$ is a compact closed Riemannian manifold, $dx$ the volume measure and $\nabla$ the Riemannian gradient operator, the best constant  in the Poincar\'e inequality is given by taking infimum of the Raleigh quotient 
$$\int_N |d f|^2 dx \over \int_N f^2 dx$$
over the set of non-constant  smooth functions of zero mean. For this reason Poincar\'e inequality is associated with  the study of the spectral properties of  the Laplacian operator and hence the underlying  Riemannian geometry.  For quasi isometric Riemannian manifolds, if a Poincar\'e inequality holds for one manifold it holds for the other.

The Poincar\'e constant $C=\lambda_1$,  that is the first non-trivial eigenvalue of the Laplacian on a compact manifold,  is  related to the isoperimetric constant in Cheeger's isoperimetric inequality. Standard isoperimetric inequalities say that  for an open bounded set $A$ in $\R^n$, the ratio between the area of its boundary $\partial A$ and the volume of $A$  to the power of $1-{1\over n}$ is minimised by the unit ball.  
  In $\R^2$, it means that $L^2 \geqslant  4 \pi A$ where $A$ and $L$ are respectively the area of an open set  and $L$ the length of its boundary.  By the Federer-Fleming theorem the isoperimetric constant is the same as
  $\inf_{f \in C_K^\infty}  {  \|\nabla f\|_{L_1}\over  \|f\|_{{n\over n-1}}}$.

   In relation to Poincar\'e inequality, especially in infinite dimensions,  the more useful form of  isoperimetric inequality is that of Cheeger. Following Cheeger let
$$h=\inf_{A} {\mu(\partial A)\over \min\{\mu(A), \mu(M/A)\}}.$$
where the infimum is taken over all open subsets of $M$. Then $h^2\leqslant 4\lambda_1$ by Cheeger \cite{Cheeger-inequality}. On the other hand let $K$ be  the lower bound of the Ricci curvature.  
Then it is shown by Buser  \cite{Buser82} that $\lambda_1\leqslant C(\sqrt K h+h^2)$ for which M. Ledoux \cite{Ledoux-inequality-94} has a beautiful analytic proof.  Versions of isoperimetric inequalities
for Gaussian measures  in infinite dimensional spaces are explained in Ledoux \cite{ Ledoux-StFlour} and Ledoux-Talagrand \cite{Ledoux-Talagrand}. 
\medskip

We take the view that the Poincar\'e inequality describes properties of the measure $\mu$ for a given gradient operator.  Poincar\'e inequality does not hold for $\R^n$ with Lebesgue measure. It does hold for the Gaussian measure. For the standard normalised Gaussian measure, the Poincar\'e constant is $1$
and the corresponding eigenfunction of the Laplacian is the Hermitian polynomial $x/2$. 
If  $h$ is a smooth function $\mu$ a measure which is absolutely continuous with respect to the Lebesgue measure with density $e^{-2h}$, for any $f$ in the domain of $d$,
$$\int_N |df|^2(x) \mu(dx)=-\int_N \langle f, \Delta f\rangle (x)\mu(dx)-2\int_N\langle df, dh\rangle \mu(dx).$$
The corresponding Poincar\'e inequality is then related to the Raleigh quotient  of the Bismut-Witten Laplacian 
$\Delta^h:=\Delta+2 L_{\nabla h}$ on $L^2(M, e^{-2h}dx)$. The  Bismut-Witten Laplacian
$$\Delta^h: L^2(M, e^{-2h}dx)\to L^2(M, e^{-2h}dx)$$ is unitarily equivalent to the following  linear operator on  $L^2(M, dx)$:
$$\square^h=\Delta+(|dh|^2+\Delta h).$$
The spectral property of $\Delta^h$, hence the validity of the Poincar\'e inequality for $\mu$ is determined by the spectral property of the Schr\"odinger operator $\square^h$ on $L^2(M; dx)$. 

\bigskip

{\bf The state space.} 
A number of infinite dimensional spaces have been the objects of study. They include the
space of paths over a finite state space, in particular the space of  loops, or more generally space of maps. 
Our interest  in path spaces comes from the desire to understand regularity properties of measures
which are distributions of important  stochastic processes and to establish a related Sobolev calculus.
By path space we mean the space of continuous paths which are not necessarily smooth, of which Wiener space $\Omega$ with Wiener measure $\p$ is a primary example. 
Other natural measures are those induced by stochastic processes  such as the Brownian Bridge measure.  The properties of Brownian Bridge measures are non-trivial. They are singular measures with respect to  the Wiener measure.  For the Wiener space the gradient operator would be that related to the Cameron-Martin space of the measure.   Interesting functions on the Wiener space such as stochastic integrals are not in general differentiable as real valued functions on the Banach space $\Omega$. They are on the other hand often differentiable in the sense of Malliavin calculus where the functions are differentiated in the directions of the Cameron-Martin space, also called H-differentiation. This will play the role of 
 the standard differentiation on a differentiable manifold. The corresponding gradient operator will be used in the formulation of Poincar\'e inequality with respect to measures on the Wiener space and  on more general spaces of continuous paths. 
\medskip

{\bf Main Results. }Although a Logarithmic Sobolev inequality holds for the Brownian bridge measure on the Wiener space and for the Brownian motion measure on the path space over a compact manifolds, it may not hold on  a general loop space.  As noted by L. Gross, \cite{Gross91}, Poincar\'e inequalities 
 do not hold on the Lie group $S^1$ due to the lack of connectedness of the loop space.   A. Eberle, \cite{Eberle-absence-gap}, gave an example of a compact simply connected Riemannian manifold on which the Poincar\'e inequality does not hold for the Brownian bridge measure. Driver-Lohrenz \cite{Driver-Lohrenz96} showed that Logarithmic Sobolev inequalities hold on loop groups for the heat kernel measure on loop spaces  over a compact type Lie group. For the Brownian bridge measure a positive result was obtained by Aida for the Hyperbolic space $H$ where he obtained a weak form of Logarithmic Sobolev inequality with a weight function. We show here that Aida's type weak logarithmic Sobolev inequality leads to a weak logarithmic Sobolev inequality using the non-homogeneous $H_1$ norm together with an $L^\infty$ norm. We also show that there is a precise passage from weak Logarithmic Sobolev inequality  to weak Poincar\'e inequality. As a corollary we obtain a Poincar\'e inequality for the Brownian bridge measure on loop spaces over the hyperbolic space where the Bismut tangent space  is defined using the Levi-Civita connection. 
 
 \medskip
 
 {\bf Acknowledgement.}
 We would like to thank Martin Hairer for stimulating discussions and for pointing to look into the work of Guillin et al. This research is supported by  the EPSRC( EP/E058124/1).
 
\section{The Missing Arguments}

 On a compact manifold,  Poincar\'e inequality for the Laplace-Beltrami
operator is proved by showing that
$$\inf_{f\in H^1, |f|_{L^2}=1, \int f =0} \int_M |\nabla f|^2 dx$$ is attained, by a non constant function.
The main ingredient for this method to work is the Rellich-Kondrachov compact embedding  theorem of $H^{1,q}$ into $L^p$, which we do not have
 in the infinite dimensional situation. 
The other approach is the dynamic one which we will now explain. It is equivalent to consider the corresponding operator on differential 1-forms. By a Riemannian manifold we mean a connected Riemannian manifold.

We give the standard semi-group argument which in principle works for measures on infinite dimensional spaces. For better understanding assume that the measure concerned is on a  finite dimensional Riemannian manifold. Let $M$ be a smooth complete manifold and for $x_0\in M$ let $(F_t(x_0,\omega), t  \geqslant  0)$ be the solution flow to a stochastic differential equation
$$dx_t=\sum_{i=1}^m X_i(x_t)\circ dB_t^i +X_0(x_t)dt$$
with initial value $x_0$. Here $X_i$ are smooth vector fields and $\omega$ the chance variable. Let $\mu_t$ be the law of $F_t$ with initial distribution $\mu_0$. It is given by
$$\mu_t(A)=\int_{x \in M} P(F_t(x) \in A)\mu_0(dx).$$  
 If the system is elliptic the $X_i's$ induces a Riemannian metric and the infinitesimal generator is of the form ${1\over 2} \Delta+A$ for $\Delta$ the Laplace-Beltrami operator for the corresponding Levi-Civita connection and $A$ a vector field called the drift. Suppose that the drift  is of gradient form given by a potential function $h$. Then the system has an invariant measure $\mu(dx)=e^{2h}dx$ which is finite for example if $\Ric_x-2\Hess_x (h)>\rho$ for a positive number $\rho$. Here $\Ric$ denotes the Ricci curvature for the intrinsic Riemannian metric. More generally the finiteness of the invariant measure holds even if the lower bound $\rho$ depends on $x$ provided that the quantity
 $$\sup_{x\in K} \int_0^\infty \E e^{-\int_0^t \rho(F_s(x, \omega)) ds}dt,$$
  is finite for any given compact subset $K$,
 see \cite{group} \cite{flow}.  In the following we assume that the system has an finite invariant measure $\mu$ and we assume that $P_tf$ converges in $L^2(M;\mu)$ as $t$ goes to infinity.  Then
 \begin{eqnarray*}
\int_M (f -\bar f)^2d\mu&=&\int_M \left( f^2 - \bar f^2 \right)\; d\mu=  \lim_{t\to \infty} \int_M(f^2 -(P_t  f)^2)(x)d\mu(x)\\
&=&-  \lim_{t\to \infty} \int_M  \int_0^t  {\partial \over \partial s}(P_s f)^2 ds \; d\mu\\
&=&\lim_{t\to \infty}\int_0^t \int_M  (dP_s f )^2 \;d\mu \; ds 
=\int_0^\infty \int_M (dP_sf)^2\; d\mu\; ds .
\end{eqnarray*} 
Here $d^*$ is the $L^2$ adjoint  of the differential operator $d$ with respect to the measure $\mu$.  For $v_0\in T_{x_0}M$, let $TF_t(\omega)(v_0)$ be the spatial derivative of $F_t(x_0, \omega_0)$ in the direction of $v_0$ which in general only exists in the $L^2$ sense.  Define
$$\delta P_t(df)(v_0)=\E df(TF_t(\omega)(v_0)).$$
This extends to  a semi-group on bounded differential 1-forms and under suitable conditions solves a corresponding partial differential equation on differential 1-forms.
Assume  that $d(P_tf)=\delta P_t(df)$, see \cite{flow} for conditions for this to hold.  The condition $\Ric_x-2\Hess_x (h)>\rho$ for some constant $\rho>0$ implies that the norm of the conditional expectation of the derivative flow is controlled by  $  e^{-\rho t}$, see \cite{moment-stability} for more precise estimate, and hence we have control for $d(P_tf)$ and
  \begin{eqnarray*}
\int_M (f -\bar f)^2d\mu&\le&
\int_0^\infty \int_M \E |df|^2(F_t((x, \omega)) \; d\mu\;  e^{-\rho s} ds \\
 &=& {1\over  \rho}  \int_M  |df|^2(x) \; d\mu.
\end{eqnarray*} 
 
This proof using the equivalence of  Poincar\'e inequality  and the semi-group inequality  $|P_t|^2_{L^2}\leqslant e^{- \rho  t}$.
The condition $\Ric- 2\Hess(h)$ is bounded from below
is called Bakry-Emery condition \cite{Bakry-Emery}.  In the case of $M=\R^n$, the standard Gaussian measure corresponding to a system with $\Ric\equiv 0$ and  the Bakry-Emery condition is exactly the log-convexity condition on measures.  In this case $h(x)=-{x^2\over 4}$ and the constant  in the Poincar\'e inequality is $1$. The Poincar\'e theorem  above can be considered as a generalisation to the Lichnerowicz Theorem, a  standard theorem in  Riemannian geometry which gives a lower bound for the  first eigenvalue of the Laplacian in  terms of  the lower bound on the Ricci curvature. 

In fact under the assumptions given above the stronger Logarithmic Sobolev inequality holds:
$$\int f^2 \log {f^2\over \E|f|^2} \mu(dx)\leqslant {2\over \rho}\int |\nabla f|^2 \mu(dx).$$
For the standard Gaussian measure the logarithmic Sobolev constant is $2$. The proof is virtually the same. We apply the same argument to the function  $P_t f\log P_tf$, with limit $\bar f  \log(\bar f)$, instead of to $f$ on functions bounded below by a positive constant. A Fatou lemma allows the extension to positive functions. The final result is obtained by applying the same argument  to $| f|$ and observe that $|\nabla |f| | =|\nabla f|$.

Instead of  the equilibrium measure $\mu$ on the finite dimensional Riemannian manifold, we study the law of a stochastic process $(F_t(\omega), 0\leqslant t\leqslant T)$ on the space of paths over $M$, of which the Wiener measure on the Wiener space is a special case.  To apply the semi-group argument we would need to have
a good understanding of the semi-group associated to $d^*d$ and corresponding semi-groups on differential 1-forms which is itself an issue to be resolved, except in the case of the classical Wiener space. The semi-group argument is modified and the standard method is the Clark-Ocone formula approach, which combines the problem of defining the unbounded operator $d$ with the investigation of the measure itself.

 \subsection{ Poincar\'e Inequality for Gaussian Measures}
First let $\mu$ be a a Gaussian measure  whose support is a finite dimensional vector space, $\R^n$. It is not surprising that a function $f$  differentiable in $\R^n$ with  $d f=0$ is a constant on this subspace. Let $B$ be a Banach space and $\mu$ a mean zero Gaussian measure  with $B$  its topological support  and covariance operator $\cov$.  The Cameron-Martin space $H$ is the intersection of all vector subspaces of $B$ of full measure and it is a dense set of $B$.  Yet  the Gaussian measure $\mu$ does not charge $H$, $\mu(H)=0$.  And $\mu$ is quasi translation invariant precisely in the directions of vectors of $H$.  Let  $f: B\to \R$ be an $L^2$ function differentiable in the directions of $H$ and let  $\nabla f\equiv \nabla_Hf$, an element of $H$,  be the gradient of $f$ defined by
$\langle \nabla f, h\rangle_H=df(h)$. The square of the $H$-norm of the gradient $f$  is precisely $\sum_i |df(h_i)|^2$ where $h_i$ is an orthonormal basis of $H$. There is a corresponding quadratic form:
$\int _B |\nabla f|^2_H(x) \mu(dx).$ 

 When $B$ is a Hilbert space the Cameron-Martin space is the range of $\cov^{1\over 2}$ and $\cov$ can be considered as a  trace class linear operator on $B$.
 If $f$ is a $BC^1$ function, $\nabla_Bf$ is defined and $\nabla_Hf=\cov  \nabla_Bf$.
The associated quadratic form is $\int_B |\cov^{-1/2} \nabla_Bf|_B^2  d\mu(x)$
and  the Poincar\'e inequality becomes, for $f$ with zero mean,
$$\int f^2(x)\mu(dx) \leqslant {1\over C}\int_B |\cov^{-1/2} \nabla_Hf|_B^2  d\mu(x).$$
 To the quadratic form 
$\int_B |\cov^{-{1\over 2}} \nabla_Hf|_B^2  d\mu(x)$ there associates a linear operator
$\Lo$ given by
$$\int f\Lo g d\mu=\int \langle \nabla_Hf, \Gamma^{-1} \nabla_Hg\rangle_B \;d\mu.$$
The dynamic of the corresponding semi-group is given by  the solution of the the Langevin equation  $du_t=dW_t-{1\over 2}u_tdt$, where  $W_t$ is a cylindrical Wiener process on $H$.

\bigskip

For $T$ any given positive number, define $$ C_0(\R^m)\equiv \Omega=\{ \sigma: [0,T] \to \R^m:   \sigma(0)=0 \hbox{ continuous} \}.$$
The standard Wiener measure $\p$  on $\Omega$ is a Gaussian measure with Covariance
$$\cov(l_1,l_2)=\int_0^T\int_0^T (s\wedge t) d\mu_{\ell_1}(s)d \mu_{\ell_2}(t)$$
where $\mu_{\ell_i}$ are measures on $[0,T]$ associated to $\ell_i\in \Omega^*$. Its associated Cameron-Martin space is the Sobolev space on $\R^n$ consisting of paths in $\Omega$ with finite energy
$$H=\left\{h: [0,T]\to \R^m \hbox{ such that } \int_0^T |\dot h_t|^2 dt<\infty\right\}.$$
Denote by $C_K^\infty$ the space of real valued functions on $N$ with compact support. Let $$ {\rm Cyl}=\{f(\omega_{t_1}, \dots, \omega_{t_k}),  f\in C_K^\infty(\stackrel{k}{\overbrace{\R^m\times\dots \times  \R^m}}), 0<t_1\leqslant \dots \leqslant t_k\leqslant T\}.$$
For the cylindrical function $f$, 
$$df(\omega)(h)=\sum_{i=1}^k \partial_i f(\omega_{t_1}, \dots, \omega_{t_k})(h_{t_i}),$$
where $\partial_i f$ stands for differentiation with respect to i-th variable.
Hence  $$\nabla f(\omega)(t)=\sum_{i=1}^k \partial_i f (\omega_{t_1}, \dots, \omega_{t_k})  t\wedge t_i$$
where $t\wedge t_i$ denotes $\min(t,t_i)$.
The gradient operator, more precisely the associated quadratic form, is associated to the Laplace operator
$\Lo=-{1\over 2}d^*d$, where $d^*: L^2( {\Bbb L}(\Omega \to H), \p)\to L^2(\Omega, \p)$ is the adjoint of the
differential operator $d$. Note that $d^*$ depends on the measure $\mu$ and the norm on the Cameron-Martin space. It is also called the number operator as it acts as a multiplication operator on each chaos of the Wiener Chaos decomposition of the $L^2$ space: $L^2(\Omega,\mu)=\oplus_{k=0}^\infty H_k$. Then $d^*d f=\sum_{n=0}^\infty k I_k(f)$, where $I_k(f)$ is the orthogonal projection of $f$ to the $k$-th chaos $H_k$. The operator $d$ whose initial domain the set of smooth cylindrical functions with compact support is known to be a closable operator.  Let $\D^{1,2}$ be the closure of $d$ under the graph norm with the graph norm
$|f|^2_{L^2}+\int |\nabla f|^2 \, d\mu$.  These are referred as the Sobolev space (defined by H-differentiation).
 The  Gaussian Sobolev space structure can be given to any mean zero Gaussian measures and a Poincar\'e inequality related to the gradient can be shown to be valid for all functions in $\D^{2,1}$ with Poincar\'e constant $1$. The classical approach to this is to use the symmetric property, rotation invariance, of the Gaussian measure.  It is Gross, \cite{Gross75}, who obtained the Logarithmic Sobolev inequality and notices its validity in an infinite dimensional space and its relation with Nelson's hypercontractivity.  A number of simple proofs  have since been given. The dynamic argument we outlined earlier also works as the Ornstein-Uhlenbeck semi-group $P_t$ for $\Lo$  has the commutation property: $\nabla P_t f=e^{-t}P_t(\nabla f)$. 
\bigskip

  The Brownian Bridge measure  $\nu_{0,0}$ is the law of the Brownian bridge starting and ending at $0$, one of whose realisation is $B_t -{t\over T}B_T$. It can also be realised as solution to the time-inhomogeneous SDE $dx_t=dB_t-{x_t\over T-t}dt$.
The Brownian bridge measure is a Radon Gaussian measure and Gaussian measure theory applies to give the required Logarithmic Sobolev inequality as well as the Poincar\'e inequality with Poincar\'e constant $1$.

\subsection{The Path Spaces}

Let $M$ be a smooth finite dimensional Riemannian manifold which is stochastically complete.
A Brownian motion on $M$ is the strong Markov process $x_t$ with values in $M$ such that probability density 
of $x_t$ is the heat kernel $p_t(x,y)$. By stochastically complete we mean that $\int_M p_t(x,y)dy=1$, which holds true if the lower bound of the Ricci curvature , $\Ric_x=\inf_{|v|=1} Ric_x(v,v)$,
goes to minus infinity slower than $-d^2(x)$, where  $d(x)$ denotes the Riemannian distance of $x$ from
a fixed point $x_0\in M$, or by a result of Grigor'yan \cite{Grigoryan}  if the growth of the volume of  geodesic balls of radius $r$ has an upper bound of the type: $\int^\infty {r dr \over \log vol(B_{x_0}(r))}=\infty$.  Fix a number $T>0$. We define the path space on $M$ based at $x_0$ as 
 $$\C_{x_0}M=\{\sigma: [0,T]\to M, \sigma(0)=x_0 | \hbox{ $\sigma$ is continuous}\}.$$
It is  Banach manifold modelled on the Wiener space $\C_0\R^n$ for $n$ the dimension of the manifold. It is also a complete separable metric space with distance function $\rho$ given by:
 $$\rho(\sigma_1, \sigma_2)=\sup_t d(\sigma_1(t), \sigma_2(t)\}.$$
 For $y_0\in M$, define 
\begin{eqnarray*}
\C_{x_0, y_0}M&=&\{ \sigma \in \C_{x_0}M \;| \;\sigma(T)=y_0\}\\
L_{x_0}M&=&\{ \sigma \in \C_{x_0}M\,|\,\sigma(T)=x_0\}.
\end{eqnarray*}
Both $\C_{x_0, y_0}M$ and $L_{x_0}M$ are closed subspaces of $\C_{x_0}M$ viewed as a metric space.

\bigskip

The Brownian motion measure $\mu_{x_0}$ on $\C_{x_0}M$  is  the pushed forward measure of $\p$ by the Brownian motion.  We view the Brownian motion measure dynamically.  Define the space of cylindrical functions:
$$  \Cyl=\{F| F(\sigma)=f(\sigma_{t_1}, \dots, \sigma_{t_k}), f\in C^\infty_K (M^k),  t_0<t_1<\dots <t_k\leqslant T\}.$$
Then 
\begin{eqnarray*}
&&\int_{\C_{x_0}M} f(\sigma_{t_1}, \dots, \sigma_{t_k})d\mu_{x_0}(\sigma)\\
&&=\int_{M}\dots \int_{M} f(x_1,\dots, x_k)p_{t_1}(x_0, x_1)p_{t_2-t_1}(x_1,x_2)\dots p_{t_k-t_{k-1}}(x_{k-1}, x_k)\Pi_idx_i.
\end{eqnarray*}

Let $ev_t: \C_{x_0}M\to \R$ be the evaluation map at time $t$. The conditional law of the canonical process
$(ev_t, t\in [0,T])$  on $\C_{x_0}M$ given $ev_T(\sigma)=y_0$ is denoted by $\mu_{x_0,y_0}$, hence for a Borel set $A$ of
 $\C_{x_0}M$,
\begin{equation}
\mu_{x_0,y_0}(A)=\mu_{x_0} (\sigma \in A| \sigma_T=y_0).
\end{equation}
Restricted to $\F_t$ for $t<T$ the two measures are absolutely continuous with respect to each other  with Radon Nikodym   derivative given by
${p_{T-t}( y_0, \sigma_t) \over p_T(x_0,y_0)}.$ 
Define
  $$  \Cyl_t=\{F| F(\sigma)=f(\sigma_{s_1}, \dots, \sigma_{s_k}), f\in C^\infty_K (M^k), 0<s_1<\dots <s_k\leqslant t< T\}. $$ For $F\in \Cyl_t$,
   \begin{eqnarray*}
&&p_T(x_0,y_0)\int_{\C_{x_0}M} f(\sigma_{s_1}, \dots, \sigma_{s_n}) d\mu_{x_0,x_1}(\sigma)\\
& = &\int_{M^n} f(x_1,\dots, x_n)  p_{s_1}(x_0, x_1)\dots 
p_{s_{n}-s_{n-1}}(x_{n-1},x_n)p_{T-s_n}(x_n,y_0)\Pi_{i=1}^n dx_i.
\end{eqnarray*}
 \medskip
 
That this defines a measure on $C_{x_0}M$ due to Kolmogorov's theorem and the assumption that for $\beta>0,\delta>0$,
\begin{equation}
\int \int  d (y, z) ^\beta { p_s(x_0,y)p_{t-s}(y,z)p_{T-t}(z, y_0)\over P_T(x_0,y_0)}  dy dz
\leqslant C |t-s|^{1+\delta},
\end{equation}
whose validity we discuss later.
The Brownian bridge measure $\mu_{x_0, y_0}$ starting at $x_0$ and  ending at  $y_0$ charges only the
subspace, $\C_{x_0, y_0}(M)$. If $x_0=y_0$ the Brownian bridge measure  only charges   the loop space $L_{x_0}M $.

 \subsection{Where is the Problem?}
 
To see where the problem lies we look at the stochastic differential equation representation for the
Brownian bridge measure. The fundamental difference between the dynamic representation for Brownian bridge measure and that  for the Brownian motion measure is that the SDE  for the Brownian bridge is no longer homogeneous and  a singularity develops as  $t$ approaches the terminal time.  The conditioned Brownian motion realisation of the Brownian bridge on the other hand poses a more artificial problem: the conditioned process  is not adapted to the original filtration $\F_t$ of the Brownian motion we started with. It is however adapted to the enlarged filtration  $\G_t=\F_t\vee \sigma\{B_T\}$.

Let  $X: M\times \R^n\to TM$ be a smooth map with $X(x):\R^m\to T_xM$ linear for each $x\in M$ and an isometric surjection.  We assume that for $v\in T_xM$ and $U\in \Gamma TM$,
$$\nabla _vU=L_{Z^v}U(x)$$
where $Z^v(y)=X(y)Y(x)v$. That such a map $X$ exists and defines the given connection was discussed in
\cite{Elworthy-LeJan-Li-book}. Consider the following stochastic differential equation:
\begin{equation}
\label{sde}
dy_t=X(y_t)\circ dB_t.
\end{equation}
Its infinitesimal generator is given by ${1\over 2}\Delta$ for $\Delta$ the Laplacian and the solution is the Brownian motion on $M$. The SDE perturbation by the gradient of the logarithm of the heat kernel 
\begin{equation}
\label{SDE-bridge}
dy_t=X(y_t)\circ dB_t+\nabla \log p_{T-t}(y_t, y_0)dt
\end{equation}
defines a process $(y_t, t<T)$. Here $\nabla$ denotes the Levi-Civita connection.  If \begin{equation}
\int_0^T |\nabla \log p_{T-t}(y_t, y_0)|dt<\infty,
\end{equation}  $\lim_{t\to T}y_t$ is well defined.

On $\R^n$, the time dependent vector field is $-{y_t-y_0\over T-t}$ and exert a strong pull 
on the Brownian particle toward $y_0$.  As the Brownian motion measure and the Brownian bridge measure are equivalent  on $\F_t$ for $t<T$, the Brownian Bridge cannot explode before the terminal time. That the solution gives rise to the measure $\nu_{0,0}$  on the path space restricted to $\F_t, t<T$ is the consequence of the Girsanov transform:  For $t<T$, the law of $\{y_s: s<t\}$
is absolutely continuous with respect to that of the Brownian motion with Radon-Nikodym derivative $N_t$
on $\F_t$  given by
$$e^{\int_0^t  \langle \nabla \log P_{T-s}(x_s), X^*(x_s) dB_s\rangle -{1\over 2}\int_0^t  |\nabla \log P_{T-s}(x_s)|^2 ds}={P_{T-t}(x_t, y_0)\over P_T(x_0,y)}.$$
Hence they agree on cylindrical functions. To show that they agree everywhere, we  only need to
show that $y_t$ has continuous sample path, i.e. for some $p>0$, $\delta>0$,
\begin{equation}
\E d(y_t,y_s)^p\leqslant C |t-s|^{1+\delta}.
\end{equation}

We summarise now all conditions that we need so far

\begin{assumption}[A.]
\begin{enumerate}
\item $\int _M p_t(x,y)dy=1$.
\item For some constant $p>0$ and $\delta>0$, $$\int d(y_t,y_s)^p d\mu_{x_0,y_0}\leqslant C |t-s|^{1+\delta}$$
\item $$\int_0^T |\nabla \log p_{T-t}(y_t, y_0)|dt<\infty, \qquad a.s.$$
\item $$ \int \int  d (y, z) ^\beta { p_s(x_0,y)p_{t-s}(y,z)p_{T-t}(z, y_0)}   dy dz
\leqslant C |t-s|^{1+\delta}.$$
\end{enumerate}
\end{assumption}

Further gradient estimates on the heat kernel are needed for the validity of integration by parts formulae and Clark-Ocone formulae. See e.g. Driver \cite{Driver94} and Aida \cite{Aida2000}. See also Gong-Ma \cite{Gong-Ma} for an alternative formulation of the Clark-Ocone formula.
We summarize the known heat kernel estimates here. 
\begin{itemize} 
\item For $x $ not in the cut locus of $y$, for small $t$ 
$$P_t(x, y)=(2\pi t)^{-n/2}e^{{-d(x, y)^2\over 2t}}\theta_y(x)^{-1\over 2} (1+o(t))$$
where $\theta_y(x)$ is Ruse's invariant.
For hyperbolic space,
$$\theta_1(x_0)= \left({\sinh r(x_0) \over r(x_0)}\right)^{n-1}.$$

\item On a compact manifold $M$, known estimates on the time dependent vector fields are:
\begin{equation}
\label{heat kernel 2}
|\nabla \log p_{T-t}(x, y_0)|\leqslant  C{d(x,y_0)\over T-t}+{C\over \sqrt{T-t}}, \qquad t\in [0, T)
\end{equation}
\end{itemize} 

For the Hyperbolic space, the above assumption holds.  For example it is shown in Aida \cite{Aida2000} that (\ref{heat kernel 2})  holds on the hyperbolic spaces. He used the iteration formula for heat kernels  for $H^n$, iterated on $n$.

 \section{A weak Logarithmic Sobolev Inequality}

  For any torsion symmetric metric connection $\nabla$ on the path space, whose parallel translation along a path $\sigma$ is denoted by $\paral_\cdot$, there is the tangent sub-space to $T_{\sigma}\C_{x_0}M$
$$H_\sigma=\{ \paral_s k_s: k\in L_0^{2,1}(T_{x_0}M)\},$$
which we call the Bismut tangent space with Hilbert space norm induced from the Cameron Martin space.
The tangent space $T_{x_0}M$ is identified with a copy of $\R^n$ through a chosen linear frame $u_0$. 
Let $\mu$ be a probability measure on $C_{x_0}M$ including measures which concentrates on a subspace e.g. the loop space. When there is no confusion of which measure is used, we denote by the integral of an function $f$ with respect to $\mu$ by $\E f$, its variance $\E(f-\E f)^2$ by $\var(f)$ and its entropy $\E f\log {f \over \E f}$ by $\Ent(f)$.

The differential operator $d$ is cloasable whenever Driver's integration by parts formula holds. We define $\D^{1,2}\equiv \D^{1,2}(C_{x_0}M)$ to be the closure of smooth cylindrical function $\Cyl_t$, $t<T$ under this graph norm:
$$\sqrt{\int _{C_{x_0}M} |\nabla f|_{H_\sigma}^2(\sigma) \mu(d\sigma)+\int  f^2(\sigma) d\mu(\sigma)}.$$

\subsection{Aida's inequality and weak Poincar\'e inequalities}
Consider the Laplace Beltrami operator on a complete Riemannian manifold. A Poincar\'e inequality may not hold.  By restriction to an exhausting relatively compact open sets $U_n$, local Poincar\'e inequality always exist. The problem is that the Poincar\'e constant may blow up as $n$ goes to infinity.
In \cite{Eberle-local-gap} Eberle showed that a local Poincar\'e inequality holds for loops spaces over a compact manifold. However the computation was difficult and complicated and there wasn't an estimate on the blowing up rate, although it is promising to obtain a concrete estimate from Eberle's frameworks. Once a blowing up rate for local Poincar\'e inequalities are obtained, we have the so called weak Poincar\'e inequality and in the case of Entropy we have the weak Logarithmic Sobolev inequality.
\begin{eqnarray*}
\var(f) &\leqslant &\alpha (s)\int |\nabla f|^2 d\mu + s |f|_\infty^2,\\
\Ent(f^2)&\leqslant& \beta (s)\int |\nabla f|^2 d\mu + s |f|_\infty^2.
\end{eqnarray*}
We assume that $\alpha$ and $\beta$ to be non-decreasing functions from $(0,\infty)$ to $\R_+$.
These inequalities were studied by Aida \cite{Aida98}, R\"ockner-Wang \cite{Rockner-Wang01},  Barthe-Cattiaux-Roberto \cite{Barthe-Cattiaux-Roberto05}, Cattiaux-Gentil-Guillin \cite{Cattiaux-Gentil-Guillin07}.  
The rate of convergence to equilibrium for the dynamics associated to the Dirichlet form $\int |\nabla f |^2 d\mu$ is strongly linked to Poincar\'e inequalities. See Aida-Masuda-Shigekawa \cite{ Aida-Masuda-Shigekawa}, Aida \cite{Aida98},  Mathieu \cite{Mathieu98}, and R\"ockner-Wang \cite{Rockner-Wang01}. In the case of weak Poincar\'e inequalities,  exponential convergence is no longer guaranteed. Also the weak Poincar\'e inequality holds for any $\alpha$  is equivalent to Kusuoka-Aida's weak spectral gap inequality which states that any mean zero sequence of functions $f_n$ in $\D^{1,2}$ with $\var(f_n)\leqslant 1$ and $\E( |\nabla f|^2 )\to 0$ is a sequence which converges to $0$ in probability.

  \begin{proposition}
  \label{wlsiH}
  Let $\mu$ be any probability measure on $\C_{x_0}M$ with the property that there exists a positive 
  function 
  $u \in \D^{1,2}$ such that Aida's type inequality holds:
    \begin{equation}
  \label{wlsi5}
  \Ent(f^2) \leqslant \int u^2  |\nabla f|^2  d\mu, \qquad \forall f\in  \D^{1,2}\cap L_\infty
  \end{equation}
Assume furthermore that  $|\nabla u|\leqslant a$   and $\int e^{Cu^2} d\mu<\infty$ for some $C,a>0$.
 Then for all functions $f$ in $ \D^{1,2}\cap L_\infty$
 \begin{equation}\label{wlsi6}
\Ent(f^2)\leqslant \beta (s)\int |\nabla f|^2 d\mu + s |f|_\infty^2,
 \end{equation}
 where $\beta(s)=C| \log s|$ for $s<r_0$ where $C$ and $r_0$ are constants. 
\end{proposition}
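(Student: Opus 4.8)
The plan is to reduce Aida's weighted inequality \eqref{wlsi5}, whose weight $u^2$ is unbounded, to the bounded-weight regime by truncating the \emph{function} rather than the weight, and then to read off the rate $\beta(s)=C|\log s|$ from the Gaussian tail of $u$ supplied by $\int e^{Cu^2}\,d\mu<\infty$. Since \eqref{wlsi6} is homogeneous of degree two in $f$, I would first normalise $|f|_\infty=1$. Fix a threshold $K>0$ and a smooth cutoff $\chi_K\colon[0,\infty)\to[0,1]$ with $\chi_K\equiv1$ on $[0,K]$, $\chi_K\equiv0$ on $[K+1,\infty)$ and $|\chi_K'|\le2$, and set $g=f\,\chi_K(u)$. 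Because $\chi_K$ is Lipschitz and $u\in\D^{1,2}$ is positive, $g\in\D^{1,2}\cap L_\infty$, so \eqref{wlsi5} applies to $g$.

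The point of the truncation is that the weight is now effectively bounded on the support of $g$. Writing $\nabla g=\chi_K(u)\nabla f+f\,\chi_K'(u)\nabla u$ and using $u\le K+1$ on the support of $\chi_K(u)$ together with the Lipschitz bound $|\nabla u|\le a$, I would estimate
$$\int u^2|\nabla g|^2\,d\mu\le 2(K+1)^2\int|\nabla f|^2\,d\mu+8(K+1)^2a^2\,\mu(u\ge K),$$
the second term arising because $\chi_K'(u)$ is supported on $\{K\le u\le K+1\}$ where $f^2\le1$. Feeding this into \eqref{wlsi5} gives $\Ent(g^2)\le 2(K+1)^2\int|\nabla f|^2\,d\mu+8(K+1)^2a^2\,\mu(u\ge K)$.

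The delicate step, and the one I expect to be the main obstacle, is to replace $\Ent(g^2)$ by $\Ent(f^2)$ at the cost of a small error. Since $g=f$ on $\{u\le K\}$ and $0\le g^2\le f^2\le1$ everywhere, the difference $\Ent(f^2)-\Ent(g^2)$ splits into two pieces: the difference of $\E[f^2\log f^2]$ and $\E[g^2\log g^2]$, which is supported on $\{u>K\}$ and bounded by $\tfrac2e\,\mu(u>K)$ using $|t\log t|\le e^{-1}$ on $[0,1]$; and the difference $\phi(\E f^2)-\phi(\E g^2)$ with $\phi(t)=-t\log t$, where $|\E f^2-\E g^2|\le\mu(u>K)$. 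As $\phi$ is not Lipschitz at the origin, its modulus of continuity contributes an extra logarithmic factor, giving
$$\Ent(f^2)\le\Ent(g^2)+C_1\,\mu(u>K)\log\frac{e}{\mu(u>K)}.$$
Reinstating the normalisation, all error terms carry a factor $|f|_\infty^2$.

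Combining the last two displays yields \eqref{wlsi6} with $\beta(s)=2(K+1)^2$ and $s$ equal to the sum of the two error coefficients. Finally I would invoke the exponential integrability: Chebyshev's inequality gives $\mu(u\ge K)\le M e^{-CK^2}$ with $M=\int e^{Cu^2}\,d\mu$, so that $s\le C_2K^2e^{-CK^2}$. Taking logarithms shows $|\log s|\sim CK^2$ as $K\to\infty$, whence $K^2\sim C^{-1}|\log s|$ and $\beta(s)=2(K+1)^2\le C|\log s|$ once $K$ is large enough, equivalently for $s<r_0$. Since $s$ depends monotonically on $K$, one may treat $s$ as the free parameter; the only genuinely non-routine estimate is the entropy comparison of the previous paragraph, the rest being bookkeeping of constants.
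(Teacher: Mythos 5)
Your proposal is correct and follows essentially the same route as the paper's proof: you truncate $f$ as $g=f\,\chi_K(u)$ (the paper's $f_n=\a_n(u)f$ with $K=n-1$), apply the weighted inequality \eqref{wlsi5} to the truncation, bound the gradient term by $2(K+1)^2\int|\nabla f|^2\,d\mu$ plus a tail term, and invert the Gaussian tail $\mu(u>K)\leqslant Me^{-CK^2}$ to obtain $\beta(s)=C|\log s|$, exactly as the paper does via its function $b(r)$ and its inverse. The only deviation is in the bookkeeping of the entropy comparison: you estimate $\Ent(f^2)-\Ent(g^2)$ by comparing $\E[f^2\log f^2]$ with $\E[g^2\log g^2]$ pointwise (using $|t\log t|\leqslant e^{-1}$ on $[0,1]$) and absorbing the normalisation through the modulus of continuity of $t\mapsto -t\log t$, which gives an error of order $\mu(u>K)\log\frac{1}{\mu(u>K)}$, slightly sharper than the paper's $I+II$ split with Cauchy--Schwarz yielding $\sqrt{\mu(u>n-1)}$ --- both produce the same order $|\log s|$ for $\beta$.
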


\begin{proof}

 Let $\a_n: \R\to [0,1]$ be a sequence of smooth functions 
approximating $1$ such that  
\begin{equation}
\a_n(t)=\left\{ \begin{array}{ll}
1  \qquad   &t\leqslant n-1\\
\in [0,1],  \qquad & t \in (n-1,n)\\
0& t \geqslant  n
\end{array}\right.
\end{equation}
We may assume that $|\a_n'|\leqslant 2$. Define $$f_{n}= \a_n(u)f$$
for $u$ as in the assumption. 
 Then $f_{n}$ belongs to $ \D^{1,2}\cap L_\infty $ if $f$ does. We may apply Aida's  inequality (\ref{wlsi5})
 to $f_n$.  The gradient of $f_n$ splits into two parts of which one involves  $f$ and the other involves $\nabla f$. The part involving the gradient vanishes outside of the region of $A_n:=\{\omega: u(\omega)<n\}$ and on  $A_n$  it is controlled by $g$ and therefore by $n$. The part involving $f$ itself vanishes outside 
 $\{\omega: n-1<u(\omega)<n\}$ and the probability of  $\{\omega: n-1<u(\omega)<n\}$ is very small by the exponential integrability of $u$.  We split the entropy into two terms:
 $\ent(f^2)=\ent(f_n^2)+[\ent(f^2)-\ent(f_n^2)], $ to the first we apply the Sobolev inequality (\ref{wlsi5}).  \begin{eqnarray*}
 \int f_{n}^2 \log {f_{n}^2 \over \E f_{n}^2} d\mu
 &\leqslant&\int u^2 |\nabla f_{n}|^2 d\mu\\
 &\leqslant&\int u ^2 \left[  |\nabla f|\a_n(u) + |\a_n'| |\nabla u| f\right]^2d\mu\\
 &\leqslant & \int_{u<n} 2 u^2  |\nabla f|^2 \a_n^2(u) d\mu 
 +4a^2\int _{n-1<u<n} u^2f^2  d\mu\\
&\leqslant&2n^2 \int  |\nabla f|^2 d\mu + 4a^2n^2|f|^2_\infty \; \mu(n>u>n-1).
\end{eqnarray*}

Next we compute the difference between $\ent(f^2)$ and $\ent(f^2_{n})$.  
\begin{eqnarray*}
&&\ent(f^2)-\ent(f_{n}^2)=
\int \left(f^2 \log {f^2\over \E f^2}-f_{n}^2 \log {f_{n}^2 \over \E f_{n}^2} \right)d\mu \\
&&=\int \left( 1- \a_n^2(u) \right)  f^2\log {f^2\over \E f^2}d\mu
+ \int f^2\a_n^2(u)   \left(  \log {f^2\over \E f^2}- \log { \a_n^2(u)f^2 \over \E  \a_n^2(u)f^2} \right)d\mu\\
&=&I+II.
\end{eqnarray*}

Observe that 
\begin{eqnarray*}
I=&& \int \left( 1- \a_n^2(u) \right)  f^2\log {f^2\over \E f^2}\, d\mu\\
&&\leqslant \int _{u>n-1} f^2( 1- \a_n^2(u)) \log {f^2\over \E f^2}d\mu\\
&&\leqslant 2 |f|^2_\infty\; \int _{u>n-1}\log {|f|\over \sqrt{\E f^2}}\;d\mu.
\end{eqnarray*}
By the elementary inequality $\log x\leqslant x$ and Cauchy-Schwartz inequality
\begin{eqnarray*}
&&I\leqslant2 |f|^2_\infty  \int_{u>n-1} \left({|f|\over \sqrt{\E f^2}}\right)\;d\mu\\
&&\leqslant 2 |f|^2_\infty \sqrt{\E \left({|f|\over \sqrt{\E f^2}}\right)^2} \sqrt{\mu (\{u>n-1\})}\\
&&\leqslant 2|f|^2_\infty \sqrt{\mu(u>n-1)}.
\end{eqnarray*}
For the second term of the sum,  with the convention that $0\log 0=0$,
$$II=- \int_{n-1<u<n}  f^2\a_n^2(u)\log \a_n^2(u) d\mu
+\int_{u<n}  f^2\a_n^2(u)  \log {\E f^2\alpha_n^2(u)\over \E f^2}\; d\mu$$
Using the fact that  $ \log {\E f^2\alpha_n^2(u)\over \E f^2}\leqslant 0$ from  $\alpha_n^2(u)\leqslant 1$
and $x\log x \geqslant  -{1\over e}$, we see that
\begin{eqnarray*}
II&\leqslant &{1\over e} \int_{n-1<u<n}  f^2 d\mu
\le{1\over e}\;(|f|_\infty) ^2\cdot  \; \mu({n-1<u<n}).
\end{eqnarray*}
 Finally  adding the three terms together to obtain
 \begin{eqnarray*}
 \int f^2 \log {f^2\over \E f^2} d\mu
&&\leqslant 2n^2 \int  |\nabla f|^2 d\mu  +\big(4a^2n^2+{1\over e}\big)|f|^2_\infty\; \mu(n-1<u<n)\\
&&\quad + |f|^2_\infty \sqrt{\mu(u>n-1)}
\end{eqnarray*}
which can be further simplified to the following estimate:
\begin{equation}
\label{wsi6}
 \int f^2 \log {f^2\over \E f^2} \, d\mu
 \leqslant 2n^2 \int  |\nabla f|^2 d\mu  +\big(4a^2n^2+{1\over e}+1\big)|f|^2_\infty \sqrt{\mu(u>n-1)}.
\end{equation}
The  exponential integrability of $u$ will supply the required estimate on the tail probability,
$$\sqrt{\mu(u>n-1)}\leqslant  e^{-{C\over 2}(n-1)^2} \sqrt{\E e ^{Cu^2}}$$
Define   $b(r)=( 4a^2r^2+{1\over e}+1)e^{-{C\over 2}(r-1)^2}$. Then
$$ \int f^2 \log {f^2\over \E f^2} d\mu\leqslant 2n^2 \int  |\nabla f|^2 d\mu  +b(n)|f|^2_\infty.$$
For $r$ sufficiently large, $b(r)$ is a strictly monotone function  whose  inverse function is denoted by $b^{-1}(s)$ which decreases exponentially fast to $0$. Define
$\beta(s)=b^{-1}(2s^2)$.  For any $s$ small choose $n(s)$ to be the smallest integer such that  $s \geqslant  b(n)$. Then
$$ \int f^2 \log {f^2\over \E f^2} d\mu\leqslant \beta(s) \int  |\nabla f|^2 d\mu +s|f|^2_\infty $$
 Here $\beta(s)$ is of  order $|\log s|$ as $s\to 0$.
 \end{proof}

Note that in the above proof we only needed the weak integrability of the function $u^2$, or
the estimate $\mu(u>n-1)$. This leads to the following :
\begin{remark}
If (\ref{wlsi5}) holds for $u\in\D^{2,1}$ with the property $|\bigtriangledown u|\leqslant a,u \geqslant 0$ and
$$\mu(u^2>s^2)<m^2(s),$$
for a non-increasing function $m$ of the order $o(s^{-2})$,
then by (\ref{wsi6}), the weak Poincar\'e inequality holds with $\beta(s)$ of the order of the inverse function of $(s^2+2)m(s)$.
\end{remark}

\subsection{Relation between various inequalities}
The functional inequalities for a measure describes how the $L^2$ or other norms of a function is controlled by its derivatives with a universal constant.  They describe the concentration of an admissible function
around its mean. 
A well chosen gradient operator is used to give these control. On the other hand concentration inequalities are related intimately with isoperimetric inequalities. For finite dimensional spaces it was shown in the remarkable works of Cattiaux-Gentil-Guillin \cite{Cattiaux-Gentil-Guillin07} and   Barthe-Cattiaux-Roberto \cite{Barthe-Cattiaux-Roberto05} for measures in  finite dimensional spaces  one can pass from capacity  type of inequalities to weak Logarithmic Sobolev inequalities  and vice versa with great precision. Similar results holds for
weak Poincar\'e inequalities. This gives a great passage between the two inequalities. 
We give here a direct proof that this works wonderfully in infinite dimensional spaces. The proof is somewhat standard and is inspired by the  two previous mentioned articles and that of  Ledoux \cite{Ledoux-inequality-94}.

\begin{proposition}
\label{pr:entropy-variance}
If for all $f$ bounded measurable functions  in $\D^{2,1}(C_{x_0}M)$,   the weak logarithmic Sobolev inequality holds for $0<s<r_0$, some given $r_0>0$,
$$\Ent(f^2)   \leqslant \beta(s) \int |\nabla f|^2 d\mu+s|f|^2_\infty$$
where $\beta(s)=C\log{1\over s}$ for some constant $C>0$,
Then Poincar\'e inequality
$$\var(f)   \leqslant \alpha\int |\nabla f|^2 d\mu.$$
holds for some constant $\alpha>0$.
\end{proposition}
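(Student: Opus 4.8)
The plan is to deduce the Poincar\'e inequality from a capacity--measure inequality, following the philosophy of Barthe--Cattiaux--Roberto and Cattiaux--Gentil--Guillin. Recall the Maz'ya-type criterion: the Poincar\'e inequality $\var(f)\le \alpha\int|\nabla f|^2\,d\mu$ holds as soon as there is a constant $c>0$ with $\capa_\mu(A)\ge c\,\mu(A)$ for every measurable $A$ with $\mu(A)\le \frac12$, where $\capa_\mu(A)$ is the capacity, the infimum of $\int|\nabla g|^2\,d\mu$ over $g$ with $\mathbf 1_A\le g\le 1$ vanishing on a set of measure at least $\frac12$; the reduction from $\var(f)$ to capacities is the standard dyadic slicing $\Omega_k=\{f>2^k\}$ (after centering $f$ at a median and treating its positive and negative parts, each of whose level sets has measure $\le\frac12$) together with summation by parts. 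So it suffices to produce the linear-in-measure capacity bound from the weak logarithmic Sobolev inequality.

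Two ingredients feed into this. First, a reverse comparison between $L^2$ and entropy for functions living on a small set: if $\mu(g\ne 0)=p$ then $\E g^2\le \Ent(g^2)/\log(1/p)$. This follows from Jensen's inequality applied to the probability measure $g^2\,d\mu/\E g^2$, whose relative entropy with respect to $\mu$ equals $\Ent(g^2)/\E g^2$ and is at least $\log(1/p)$ since it is carried by a set of $\mu$-measure $p$. Second, optimisation of the weak inequality in $s$: since $\beta(s)=C\log(1/s)$, minimising $\beta(s)\int|\nabla g|^2\,d\mu+s|g|_\infty^2$ over $s$ gives, whenever $\int|\nabla g|^2\,d\mu$ is small compared with $|g|_\infty^2$, the tightened bound $\Ent(g^2)\le C\,\big(\int|\nabla g|^2\,d\mu\big)\,\log\!\big(e|g|^2_\infty/(C\int|\nabla g|^2\,d\mu)\big)$; in the complementary regime $\int|\nabla g|^2\,d\mu\gtrsim|g|^2_\infty$ the Poincar\'e bound for $g$ is immediate from $\var(g)\le|g|^2_\infty$.

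Combining the two on each dyadic cutoff $g_k=\min(1,(f-2^k)_+/2^k)$, which satisfies $|g_k|_\infty=1$, is supported in $\Omega_k$, equals $1$ on $\Omega_{k+1}$, and has $D_k:=\int|\nabla g_k|^2\,d\mu=2^{-2k}\int_{\Omega_k\setminus\Omega_{k+1}}|\nabla f|^2\,d\mu$, yields $\mu(\Omega_{k+1})\log\frac{1}{\mu(\Omega_k)}\le C\,D_k\log\frac{e}{CD_k}$, from which one wants to read off the capacity bound $D_k\ge c\,\mu(\Omega_{k+1})$ and then sum against $2^{2k}$ to recover $\var(f)\lesssim\int|\nabla f|^2\,d\mu$. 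The hard part is exactly the extraction of a \emph{uniform} $c$ from the last inequality: the ratio $\log(1/\mu(\Omega_{k+1}))/\log(1/\mu(\Omega_k))$ of the logarithmic weights need not be bounded when the level probabilities drop abruptly, and it is here that the logarithmic form of $\beta$ is critical, being precisely the borderline growth for which the scheme closes. I would handle this by splitting the scales into those along which $\mu(\Omega_k)$ decays slowly (bounded log-ratio, where the capacity bound holds directly) and those along which it decays rapidly; on the latter the summability of $\sum_k 2^{2k}\mu(\Omega_{k+1})$, which is merely the finiteness of $\int f^2\,d\mu$, forces $\mu(\Omega_k)$ to be eventually smaller than $2^{-2k}$, so that $\log(1/\mu(\Omega_k))\gtrsim k$ and the log-ratio tends to $1$, making the rapidly-decaying scales contribute only a bounded multiple of the energy.
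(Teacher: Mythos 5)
Your two preparatory ingredients are correct and in fact coincide with what the paper uses: the Jensen bound $\E g^2\leqslant \Ent(g^2)/\log(1/p)$ for $p=\mu(g\neq 0)$ is exactly the paper's entropy inequality with $\varphi=\log(1/p)$ on $\{g\neq 0\}$ and $-\infty$ elsewhere, and the optimisation of $s\mapsto C\log(1/s)D+s|g|_\infty^2$ at $s=CD/|g|_\infty^2$ is legitimate when $D$ is small enough that $s<r_0$. The gap is precisely at the step you flag as the hard part, and the resolution you propose is false. From $\int f^2\,d\mu<\infty$ Chebyshev gives only $\mu(\Omega_k)\leqslant \|f\|_2^2\,2^{-2k}$, hence $\log(1/\mu(\Omega_k))\gtrsim k$; this does not bound the ratio $\log(1/\mu(\Omega_{k+1}))/\log(1/\mu(\Omega_k))$, let alone force it to $1$. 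For instance $\mu(\Omega_k)=2^{-2k}$ for $k\leqslant K$ and $\mu(\Omega_{K+1})=\exp(-2^{2K})$ is consistent with every constraint you derive, and at $k=K$ the ratio is of order $2^{2K}/K$. At such an abrupt-drop scale your inequality $\mu(\Omega_{k+1})\log(1/\mu(\Omega_k))\leqslant CD_k\log\bigl(e/(CD_k)\bigr)$ genuinely permits $D_k$ of order $\mu(\Omega_{k+1})\log(1/\mu(\Omega_k))/\log(1/\mu(\Omega_{k+1}))\ll\mu(\Omega_{k+1})$, so the uniform bound $\capa(\Omega_{k+1})\geqslant c\,\mu(\Omega_{k+1})$ cannot be extracted scale by scale from the per-scale optimised inequality: single-set extraction from a weak LSI with $\beta(s)=C\log(1/s)$ inherently loses a factor $\log(1/\mu(A))$, which is exactly the difference between a Poincar\'e inequality and a weak Poincar\'e inequality with rate $\log(1/s)$.

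The paper closes this by never letting the actual level measures enter the bookkeeping. After centering at a median and normalising $g=(f-m)^{\pm}/\|(f-m)^{\pm}\|_{L^2}$ so that $\E g^2=1$, it slices at geometric levels $\delta_n=\delta_0\delta^n$ with truncations $g_n=(g-\delta_n)^+\wedge(\delta_{n+1}-\delta_n)$, and takes the entropy gain at level $n$ from the \emph{deterministic} Chebyshev bound $\mu(g>\delta_n)\leqslant\delta_n^{-2}$ (via $\varphi_n=\log\delta_n^2$), i.e.\ a gain $2\log\delta_n\asymp n$ irrespective of how fast $\mu(g>\delta_n)$ actually drops; correspondingly it applies the weak LSI with a \emph{fixed} sequence $r_n$ of order $\delta^{-(2n+2)}/n$ rather than optimising per scale. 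Then $\beta(r_n)\asymp n$ matches the gain $\asymp n$ (this is the only place the logarithmic form of $\beta$ is used, and it is matched against the deterministic scale, not against $\mu(\Omega_k)$), so the coefficients $b_n$ stay bounded, while the error terms, controlled by $|g_n|_\infty\leqslant\delta_{n+1}-\delta_n$, sum to a constant $<1/2$ that is absorbed globally against the normalisation $\E g^2=1$ instead of being beaten scale by scale. Your dyadic framework can be repaired in exactly this way: replace the gain $\log(1/\mu(\Omega_k))$ by $2k\log 2-\log\E\bigl(((f-m)^+)^2\bigr)$ and choose $s_k\sim 2^{-2k}/k^2$ deterministically; the abrupt-drop difficulty then disappears because neither the gain nor the penalty refers to the true measures $\mu(\Omega_k)$ — but at that point you have reproduced the paper's argument rather than a genuinely different capacity route.
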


\begin{proof}
By the minimizing property of the variance for any real number $m$, 
\begin{equation}
\label{min-var}
\var(f)\leqslant \int  ((f-m)^+)^2d\mu+\int  ((f-m)^-)^2 d\mu.
\end{equation}
We choose $m$ to be the median of $f$ such that $\mu(f-m>0)\leqslant {1\over 2} $ and $\mu(f-m<0)\leqslant {1\over 2}$. 

Let $g$ be a positive  function in $\D^{2,1}$ such that  $\int g^2 d \mu=1$ and $\mu\{g\not = 0\}\leqslant {1\over 2}$. Here we  take $g=g_1$ or $g=g_2$ for
\begin{equation}
\label{10}
g_1={(f-m)^+ \over  \sqrt{\int  ((f-m)^+)^2d\mu}\quad}  \hbox { or }  
\quad g_2={(f-m)^-\over  \sqrt{\int  ((f-m)^-)^2d\mu}}.
\end{equation}
For  $ \delta_0>0$ and $\delta>1$ and $0<\delta_0<\delta_1<\delta_2<\dots $ with $\delta_n=\delta_0\delta^n$,
\begin{equation*}\begin{split}
\E g^2 &=\int_0^{+\infty}2s\mu(|g|>s)ds\\
&=\int_0^{\delta_1}2s \;\mu(|g|>s)ds+\sum_{n=1}^{\infty}
\int_{\delta_{n}}^{\delta_{n+1}}  2s\mu(|g|>s)ds\\
&\leqslant \int_0^{\infty}2s\;  \mu(|g|\wedge \delta_1>s)ds +\sum_{n=1}^{\infty}
\int_{\delta_{n}}^{\delta_{n+1}}  2s\mu(|g|>s)ds\\
\end{split}
\end{equation*}
Consequently we have,
\begin{equation}
\label{g-square}
\E g^2 \leqslant \E(g\wedge \delta_1)^2 +\sum_{n=1}^{\infty}
\int_{\delta_{n}}^{\delta_{n+1}}  2s\mu(|g|>s)ds 
\end{equation}
Define  $$I_1:=\E(g\wedge \delta_1)^2, \qquad I_2:=\sum_{n=1}^{\infty}
\int_{\delta_{n}}^{\delta_{n+1}}  2s\mu(|g|>s)ds.$$

Recall the following entropy inequality. If $\varphi: \Omega\to [-\infty, \infty)$ is a function such that
$\E e^{\varphi}\leqslant 1$ and  $G$ is a real valued random function such that $\varphi$ is finite on the support of $G$, then 
$$\int G^2 \varphi d\mu\leqslant \ent(G^2).$$
Here we take the convention that $G^2\varphi=0$ where $G^2=0$ and $\varphi=\infty$.
Let
\begin{equation*}
\varphi:=\begin{cases}
\ \log2 &\text{if}\  g>0, \\
-\infty &\text{otherwise.}
\end{cases}  
\end{equation*}a
then $\int e^{\varphi}d\mu=2\mu(g\neq 0)\leqslant 1$. Hence 

$$\ent((g\wedge\delta_1)^2) \geqslant  
\int(g \wedge\delta_1)^2 \varphi d\mu$$
so that  $$\E((g\wedge\delta_1)^2)\leqslant 
\frac{1}{\log2}\ent((g\wedge\delta_1)^2). $$
We apply the  weak logarithmic Sobolev inequality 
$$\Ent(f^2)\leqslant \beta(r) \E |\nabla f|^2 +r|f|_\infty^2$$
to  $g\wedge \delta_1$ to obtain, for some $r<r_0$,
\begin{equation} \label{g-square-part-1}
\E(g\wedge\delta_1)^2 
\leqslant \frac{\beta(r)}{\log2}\cdot 
\int |\nabla g|^2 1_{g<\delta_1}\; d\mu+ \frac{r\cdot \delta_1^2}{\log 2}.
\end{equation}


Now we are going to estimate $I_2$. For $n=0,1,\dots$, let  
$$g_n=   (g-\delta_n)^+\wedge (\delta_{n+1}-\delta_n) .$$
Then $g_n \in \D^{1,2}$, $\E g_n^2 \leqslant 1$ and $$|\nabla g_n|\leqslant  
|\nabla g|1_{\delta_n\leqslant g\leqslant \delta_{n+1}}.$$
From $g_n  \geqslant (\delta_{n+1}-\delta_n)   I_{ \{ g>\delta_{n+1} \}}$,
$$\mu(g>\delta_{n+1})\leqslant    { \E g_n^2 \over  (\delta_{n+1}-\delta_n)^2}.$$
Next we observe that for $n \geqslant 1$,

\begin{equation}
\label{estimate-15}
\begin{split}
\int_{\delta_{n}}^{\delta_{n+1}}  2s\; \mu(|g|>s)ds
& \leqslant\mu(g>\delta_n)\cdot(\delta_{n+1}^2-\delta_n^2)  \\
& \leqslant \frac{\delta_{n+1}^2-\delta_n^2}{(\delta_{n}-\delta_{n-1})^2}\E g_{n-1}^2 \\
& =\delta^2{\delta+1\over \delta-1} \E g_{n-1}^2.
\end{split} 
\end{equation}




Next  we compute $\E g_n^2$.  We'll chose a function $\varphi_n$ which can be used to estimate the $L^1$ norm of $g_n^2$ by its entropy. Define
 \begin{equation*}
 \label{estimate-16}
\varphi_n:=
\begin{cases}
\log \delta_n^2 &\text{if}\  g>\delta_n, \\
-\infty &\text{otherwise.}
\end{cases} 
\end{equation*}
Then $\int e^{\varphi_n} d\mu=\delta_{n}^2\mu(g>\delta_n)\leqslant 1$, hence
$$\ent(g_n^2) \geqslant  \int g_n^2\varphi_n d\mu.$$
Thus, 
\begin{equation}
\E g_n^2\leqslant \frac{1}{\log\delta_n^2}\ent(g_n^2)
\leqslant \frac{1}{2\log \delta_0+ 2n\log\delta}\ent(g_n^2) .
\end{equation}
By (\ref{estimate-15}) and  (\ref{estimate-16})  the second term in $\E g^2$ is controlled by the entropy of the functions $g_n^2$ to which we may apply  the weak logarithmic Sobolev inequality with constants $r_n<r_0$.  The constant $r_n$ are to be chosen later.
\begin{equation}
\label{estimate-17}
\begin{split}
&\int_{\delta_{n+1}}^{\delta_{n+2}}  2s\; \mu(|g|>s)ds\\
&\leqslant \delta^2{\delta+1\over \delta-1}\cdot
\frac{1}{2\log \delta_0+ 2n\log\delta}\ent(g_{n}^2)\\
&\leqslant \delta^2{\delta+1\over \delta-1}\cdot
\frac{1}{2\log \delta_0+ 2n\log\delta}
\bigg(\beta(r_n)\int |\nabla g|^2 I_{\delta_{n}\leqslant g <\delta_{n+1}} d\mu
+ r_n\cdot |g_{n}|_{\infty}^2\bigg).
\end{split}
\end{equation}
Note that $|g_n|_{\infty}\leqslant \delta_{n+1}-\delta_n $ and  summing up in $n$ we have,
\begin{equation}
\label{estimate-18}
\begin{split}
I_2& \leqslant \frac{\delta^2(\delta+1)}{2(\delta-1)}\sum_{n=0}^{\infty}
\frac{\beta(r_n)}{\log \delta_0+n \log \delta}\int |\nabla g|^2 
I_{\delta_{n}\leqslant g <\delta_{n+1}} d\mu\\
&+\frac{\delta^2-1}{2}\sum_{n=0}^{\infty}
\frac{\delta_0^2\cdot \delta^{2n+2}}{\log \delta_0+ n\log\delta}\cdot r_n 
\end{split}
\end{equation}
Denote 
$$b_{-1}={\beta(r)\over \log 2}, \quad b_n=\frac{\delta^2(\delta+1)}{2(\delta-1)}
\frac{\beta(r_n)}{\log \delta_0+n \log \delta}$$ and
$$c_{-1}= \frac{r\cdot \delta_1^2}{\log 2}, \quad c_n=\frac{\delta^2-1}{2}\sum_{n=0}^{\infty}
\frac{\delta_0^2\cdot \delta^{2n+2}}{\log \delta_0+ n\log\delta}\cdot r_n $$
 Finally combining (\ref{g-square-part-1})  with (\ref{estimate-18}) we have
$$\E g^2\leqslant  \sum_{n=-1}^\infty b_n \int |\nabla g|^2 1_{\{ \delta_{n-1}\leqslant g <\delta_{n}\}}d\mu
+\sum_{n=-1}^\infty c_n$$
 We'll next choose $r_n$ so that $\sum c_n<1/2$ and that the sequence $b_n$ has an upper bound. This is fairly easy by choosing that $r_n$ of the order ${\delta^{-(2n+2)}\over n}$. Taking $g=g_1$, we see that
 \begin{eqnarray*}
1= \E g_1^2&\leqslant&\sup_n (b_n)\int |\nabla g_1|^2 d\mu +\sum c_n\\
& \leqslant & \sup_n (b_n){1\over \E[(f-m)^+]^2}\int |\nabla f|^2 1_{\{f>m\}} d\mu +\sum c_n.
 \end{eqnarray*}
Hence $$ \E[(f-m)^+]^2\le 2\sup_n (b_n) \int |\nabla f|^2 1_{\{f>m\}} d\mu,$$
 $$ \E[(f-m)^-]^2\le 2\sup_n (b_n) \int |\nabla f|^2 1_{\{f>m\}} d\mu.$$
The Poincar\'e inequality follows.
\end{proof}

\begin{remark} We could optimize the constant in the Poincar\'e inequality.  For example  when $r_0=1/2$, we let $\epsilon=1/8, \delta=\sqrt{2}, \delta_0=2^{\frac{9}{2}}$, the Poincar\'e constant is  approximately  $40.82C$, which is smaller than that given  in  Cattiaux-Gentil-Guillin \cite{Cattiaux-Gentil-Guillin07}.
However we do not expect to have a sharp estimate on the constant.
\end{remark}
\begin{proof}
We need to choose the $r_n$, $\delta$, $\delta_0$ carefully 
 to optimise on the constant. Assume that $\E g^2=1$ for simplicity. We choose suitable constants $\delta_0,\ \delta,\ \epsilon$ satisfying
$\frac{\epsilon}{\delta_0^2\cdot\delta^2}<r_0$ and take $r=\frac{\epsilon}{\delta_0^2\cdot\delta^2}$
in $b_{-1}$ and recall that $\beta(s)=C\log\frac{1}{s}$ here. Then
\begin{equation}
\label{estimate-19}
I_1=\E(g\wedge\delta_1)^2 
\leqslant \frac{C\cdot\log\big(\frac{\delta_0^2\cdot \delta^2}{\epsilon}\big)}{\log2}\cdot 
\int |\nabla g|^2 1_{g<\delta_1}\; d\mu+ \frac{\epsilon}{\log 2}
\end{equation}
Next we take 
$$r_n:=\frac{\log \delta^2 }{\delta_0^2\cdot\delta^{2n+2}(\log\delta_0 + n\log\delta )^2}=   \frac{1}{\delta_0^2\cdot\delta^{2n+2}}\cdot
\frac{1}{({\log\delta_0\over \log\delta} +n)^2}.$$
Choose $\delta_0,\delta$ so that  $r_n < r_0$ for each $n \geqslant 0$, 
and $\log\log \delta_0>0$. For simplicity we denote 
$A:=\frac{\log\delta_0}{\log \delta}$.  Note that 
$\beta(s)=C\log\frac{1}{s}$ in (11) and  
$$\log\frac{1}{r_n}=2\log\delta_0+2(n+1)\log\delta+
2\log(A+n)$$
It follows that

\begin{equation}
\label{estimate-20}
\begin{split}
I_2 &\leqslant C\delta^2\cdot{\delta+1\over \delta -1}
\sum_{n=0}^{\infty}\big(1+\frac{1}{n+A}
+\frac{\log(n+A)}{n+A}
\cdot\frac{1}{\log\delta}\big)\int |\nabla g|^2 I_{\{\delta_n\leqslant g <\delta_{n+1}\}} d\mu\\
& +\frac{\delta^2-1}{2\log\delta}\sum_{n=0}^{\infty} 
\frac{1}{(n+A)^3}\\
&\leqslant C\delta^2\cdot{\delta+1\over \delta -1}\big(1+
\frac{1}{A}+\frac{\log A}{A}\cdot\frac{1}{\log\delta}\big)\int |\nabla g|^2 I_{\{\delta_0\leqslant g\}} d\mu
+\frac{\delta^2-1}{4\log\delta}\cdot\frac{1}{(A-1)^2}
\end{split}
\end{equation}

Let $$C_1(\delta, \delta_0, \epsilon):=C\delta^2\cdot{\delta+1\over \delta -1}\big(1+
\frac{1}{A}+\frac{\log A}{ A}\cdot\frac{1}{\log\delta}\big)$$
$$C_2(\delta, \delta_0, \epsilon):=
\frac{C\cdot\log\big(\frac{\delta_0^2\cdot \delta^2}{\epsilon}\big)}{\log2}$$
$$C_3(\delta, \delta_0, \epsilon):=\frac{\delta^2-1}{4\log\delta}\cdot
\frac{1}{(A-1)^2}
+\frac{\epsilon}{\log2} $$

So from (\ref{estimate-19}) and (\ref{estimate-20}) and the assumption $\E g^2=1$, we have:
\begin{equation}
\E g^2\leqslant \frac{C_1(\delta, \delta_0, \epsilon)+C_2(\delta, \delta_0, \epsilon)}
{1-C_3(\delta, \delta_0, \epsilon)}\int |\nabla g|^2 d\mu  
\end{equation}
provided we choose suitable constants $\delta, \delta_0, \epsilon$
to make $C_3(\delta, \delta_0, \epsilon)<1$. Apply the above estimate to $g_1$, $g_2$ and these together with
(\ref{min-var}) give the required inequality.

\end{proof}

When the function $\beta(s)$ in weak logarithmic Sobolev inequality is of order greater than 
$\log \frac{1}{s}$, we no longer have a Poincar\'e inequality, but a weak Poincar\'e inequality 
is expected.  In fact there is the following relation. The finite dimensional version can be found in \cite{Barthe-Cattiaux-Roberto05}). We give here a direct proof without going through any capacity type inequalities.
\begin{remark}
If for all bounded measurable functions  $f$ in $\D^{1,2}(C_{x_0}M)$,   the weak logarithmic Sobolev inequality holds for $s< r_0$, some given $r_0>0$ and a non-increasing function $\beta: (0,r_0)\longmapsto R^+$,
$$\ent(f^2)  \leqslant \beta(s)  \E |\nabla f|^2 +s|f|^2_\infty$$
Then there exist constants $r_1>0, C_1, C_2$ such that for all $s<r_1$, the weak Poincar\'e inequality
$$\var(f)   \leqslant \frac{\beta\left (C_2s\log\frac{1}{s}\right)}
{C_1\log\frac{1}{s}}\E (|\nabla f|^2)+s|f|^2_\infty.$$
holds.

\end{remark}

\begin{proof} As a Poincar\'e inequality is not expected, we need to cut off the integrand at infinity. 
We keep the notation of the proof of Proposition \ref{pr:entropy-variance}. Let  $\delta_n=\delta_0\cdot \delta^n$ for some $\delta_0>1,\ \delta>1$ and the function $g$  as in (\ref{10}). We have
\begin{equation}
\label{estimate-21}
\begin{split}
&\E g^2=\E(g\wedge \delta_1)^2+
\sum_{n=1}^{N+1}\int_{\delta_n}^{\delta_{n+1}}2s\mu(g>s)ds\\
&+\sum_{n=N+1}^{2N+1}\int_{\delta_n}^{\delta_{n+1}}2s\mu(g>s)ds
+\int_{\delta_{2N+2}}^{\infty}2s\mu(g>s)ds
\end{split} 
\end{equation}
First from $\E g^2=1$, we have the following tail behaviour:
\begin{equation}
\label{estimate-26} 
\int_{\delta_{2N+2}}^{\infty}2s\mu(g>s)ds=\E(g^2-\delta_{2N+2})_{+}^2
\leqslant |g|_{\infty}^2\mu(g>\delta_{2N+2})\leqslant 
\frac{1}{\delta_0^2\delta^{4N+4}}|g|_{\infty}^2
\end{equation} 
We now consider $\delta^{4N+4}$ to be of order $1/s$. 
For the first two terms of (\ref{estimate-21}), we use estimates from the previous proof.
First recall  (\ref{g-square-part-1}), \begin{equation*}
\E(g\wedge\delta_1)^2 
\leqslant \frac{\beta(r)}{\log2}\cdot 
\int |\nabla g|^2 1_{g<\delta_1}\; d\mu+ \frac{r\cdot \delta_1^2}{\log 2}
\end{equation*}

Next by (\ref{estimate-17}), we have:
\begin{equation*}
\begin{split}
&\sum_{n=1}^{N+1}\int_{\delta_n}^{\delta_{n+1}}2s\mu(g>s)ds\\
&\leqslant C_2 \sum_{n=0}^{N}
\frac{\beta(r_n)}{n+C_3 }\int |\nabla g|^2 
I_{\delta_n\leqslant g <\delta_{n+1}} d\mu
+C_2\sum_{n=0}^{\infty}
r_n\cdot\frac{\delta^{2n}}{n+C_3}
\end{split} 
\end{equation*}
Here $C_2, C_3$ are some constants depending on $\delta_0$ and $\delta$ and $C_3={\log \delta_0\over \log \delta}$. 
For $n=0,1,\dots, N$, take $$r_n= \frac{1}{\delta^{2n}\cdot(n+C_3)}.$$   
We may assume that  $\beta(r)$ is an increasing function of order greater than $\log( {1\over r})$ for $r$ small, in which case $$\frac{\beta(\frac{1}{\delta^{2n}\cdot(n+C_3)})}{n+C_3}$$
is an increasing function of $n$ for $n$ sufficiently large. Hence
\begin{equation}
\label{estimate-23}
\sum_{n=1}^{N+1}\int_{\delta_n}^{\delta_{n+1}}2s\mu(g>s)ds
\leqslant C_2\frac{\beta\left(\frac{1}{\delta^{2N}\cdot(N+C_3)}\right)}{N+C_3}
\int |\nabla g|^2 
I_{\delta_0\leqslant g <\delta_{N+1}} d\mu+\frac{C_2}{C_3-1}.
\end{equation}

If we apply this estimate to the whole range $n\le 2N$, $\beta(r_{2N})\over 2N$ would be the order of 
$\beta({s\over |\log s|})$. However to make the estimate more precise, we take a different rate function $r_n$ for $N+1\le n\le 2N$. Let  $r_n=\frac{N}{\delta^{4N}}$ in (\ref{estimate-24}) and we will give a more precise estimate on $|g_n|_\infty$.  Apply (\ref{estimate-17}) again to the sum from $N+1$ to $2N$ in (\ref{estimate-21}) \begin{equation}
\label{estimate-24}
\begin{split}
&\sum_{n=N+1}^{2N+1}\int_{\delta_n}^{\delta_{n+1}}2s\mu(g>s)ds\\
&\leqslant C_2 \sum_{n=N}^{2N}
\frac{\beta(r_n)}{n+C_3 }\int |\nabla g|^2 
I_{\delta_n\leqslant g <\delta_{n+1}} d\mu
+C_2\sum_{n=N}^{2N}
\frac{r_n}{n+C_3}\cdot  |g_n|_{\infty}^2.
\end{split} 
\end{equation}

Since $g$ is bounded, there is $k$ such that  $\delta_k<|g|_{\infty}\leqslant \delta_{k+1}$ for some 
integer $k$.
\begin{equation*}
\begin{split}
\sum_{n=0}^{\infty}|g_n|_{\infty}^2
&= \sum_{n=0}^{k-1}(\delta_{n+1}-\delta_n)^2+(|g|_{\infty}-\delta_{k})^2\\
&\leqslant \bigg(\sum_{n=0}^{k-1}(\delta_{n+1}-\delta_n)+
|g|_{\infty}-\delta_{k}\bigg)^2\\
&= (|g|_{\infty} -\delta_0)^2
\end{split}
\end{equation*}
Hence
$$\sum_{n=0}^{\infty}|g_n|_{\infty}^2\leqslant |g|_{\infty}^2.   $$
Recall that $r_n={N\over \delta^{4N}}$,
\begin{equation}
\label{estimate-25}
\begin{split}
&\sum_{n=N+1}^{2N+1}\int_{\delta_n}^{\delta_{n+1}}2s\mu(g>s)ds\\
&\leqslant C_2\cdot\frac{\beta(\frac{N}{\delta^{4N}})}{N+C_3}
\int |\nabla g|^2 
I_{\delta_{N}\leqslant g <\delta_{2N+1}} d\mu+\frac{C_2}{\delta^{4N}}|g|_{\infty}^2
\end{split}
\end{equation}
Now adding estimates to all terms in (\ref{estimate-21}) together,   (\ref{estimate-26}-\ref{estimate-25}),  and rearrange the constants. We also note that $\E g^2=1$ and obtain
for $N$ large enough
\begin{equation}
\label{estimate-27}
1\leqslant C_1\frac{\beta(\frac{N}{\delta^{4N}})}{N}\int |\nabla g|^2 d\mu+
\frac{C_2}{\delta^{4N}}|g|_{\infty}^2 +{r \delta_1^2 \over \log 2}+{C_2\over C_3-1}
\end{equation}
Here we use the monotonicity of $\beta$:
$\beta(\frac{N}{\delta^{4N}}) \geqslant \beta(\frac{1}{\delta^{2N}\cdot(N+C_3)}) $. Take $r$ small and $\delta_0$ large so that ${r \delta_1^2 \over \log 2}+{C_2\over C_3-1}<1$. Let $s=\frac{1}{\delta^{4N}}$ in (\ref{estimate-27}), the required result follows.

\end{proof}

\begin{corollary}
 Let $\mu$ be a probability measure. Suppose that there is a positive function 
 $\mu(u^2>s^2)\sim m^2(s)$ some increasing function $m$ of order $o(s^{-2})$ for $s$ small and
 such that $|\nabla u|\leqslant a,a>0$ and for all $f\in \D^{2,1}$, 
\begin{equation}
\label{wsi10}
\Ent(f^2) \leqslant \int g=u^2|\nabla f|^2 d\mu
\end{equation}
Then for $s$ small, 
$$\var(f) \leqslant (r^2|\log r|+\frac{2}{|\log r|})m(r|\log r|)\int|\nabla f|^2a\mu +s |f|^2_\infty.$$
\end{corollary}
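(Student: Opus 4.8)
The plan is to obtain the weak Poincar\'e inequality by chaining together the two reductions already established in this subsection: first converting Aida's inequality (\ref{wsi10}) into a weak logarithmic Sobolev inequality, and then converting that into a weak Poincar\'e inequality via the final Remark above. The only genuinely new ingredient is that the exponential integrability of $u$ used in Proposition \ref{wlsiH} is replaced by the weaker tail estimate $\mu(u^2>s^2)\sim m^2(s)$.

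First I would return to the proof of Proposition \ref{wlsiH} and observe that the estimate (\ref{wsi6}),
$$\Ent(f^2)\leqslant 2n^2\int|\nabla f|^2 d\mu+\big(4a^2n^2+\tfrac1e+1\big)|f|_\infty^2\sqrt{\mu(u>n-1)},$$
was derived using only the cut-off $\a_n(u)$, the bound $|\nabla u|\leqslant a$, and the tail probability of $u$; the exponential integrability was invoked only at the very last step. Substituting $\sqrt{\mu(u>n-1)}=\sqrt{\mu(u^2>(n-1)^2)}\sim m(n-1)$ gives
$$\Ent(f^2)\leqslant 2n^2\int|\nabla f|^2 d\mu+b(n)|f|_\infty^2,\qquad b(n)=\big(4a^2n^2+\tfrac1e+1\big)m(n-1).$$
Because $m$ is of order $o(s^{-2})$, the function $b$ is strictly monotone for large $n$ and hence invertible; setting $s=b(n)$ and $\beta(s)=2n^2$ produces a genuine weak logarithmic Sobolev inequality $\Ent(f^2)\leqslant\beta(s)\int|\nabla f|^2 d\mu+s|f|_\infty^2$, whose rate $\beta$ is, by the Remark following Proposition \ref{wlsiH}, of the order of the inverse function of $(s^2+2)m(s)$.

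Next I would feed this weak logarithmic Sobolev inequality into the final Remark of the previous subsection, which states that a weak logarithmic Sobolev inequality with non-increasing rate $\beta$ of order greater than $\log\frac1s$ implies the weak Poincar\'e inequality
$$\var(f)\leqslant \frac{\beta\left(C_2 s\log\frac1s\right)}{C_1\log\frac1s}\,\E|\nabla f|^2+s|f|_\infty^2.$$
Substituting the explicit $\beta$ from the previous step, the argument $C_2 s\log\frac1s$ is of order $r|\log r|$ after the rescaling $s\mapsto r$, the quadratic-in-threshold prefactor coming from $b(n)$ together with the division by $\log\frac1s$ collapses into the factor $r^2|\log r|+\frac{2}{|\log r|}$, and reading $m$ off at the threshold corresponding to this argument yields the stated coefficient $\big(r^2|\log r|+\frac{2}{|\log r|}\big)m(r|\log r|)$. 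Applying this, as in Proposition \ref{pr:entropy-variance}, to $g_1$ and $g_2$ and combining with the variance splitting (\ref{min-var}) gives the conclusion.

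The main obstacle is the bookkeeping in this last substitution: one must track the implicit inversion $n=b^{-1}(s)$ through its composition with the weak Poincar\'e formula and verify, using the hypothesis $m=o(s^{-2})$ (which guarantees both the monotonicity of $b$ and the convergence of the series $\sum c_n$ appearing in the final Remark's proof), that the orders combine into the stated closed form. As a consistency check, when $m(s)=e^{-cs^2}$, the exponentially integrable case, one recovers $\beta(s)\sim|\log s|$, so the coefficient above reduces to a constant and the statement degenerates to the full Poincar\'e inequality of Proposition \ref{pr:entropy-variance}.
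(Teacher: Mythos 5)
You have correctly identified the paper's (implicit) route: the paper gives this corollary no separate proof, and the intended argument is precisely the chaining you describe --- the remark following Proposition \ref{wlsiH}, which observes that only the tail probability $\mu(u>n-1)$, not the exponential integrability of $u$, enters estimate (\ref{wsi6}), so that the tail bound $m$ yields a weak logarithmic Sobolev inequality with rate of the order of the inverse function of $(q^2+2)m(q)$ (more precisely its square, since the Dirichlet coefficient in (\ref{wsi6}) is $2n^2$); followed by the final remark of the preceding subsection, which converts a weak logarithmic Sobolev inequality with rate $\beta$ into the weak Poincar\'e inequality with rate $\beta\left(C_2 s\log\frac1s\right)/\left(C_1\log\frac1s\right)$. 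Up to that point your proposal is sound and matches the paper.

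The gap is the final substitution, which you assert ("collapses into the factor") rather than carry out, and as literally stated it does not go through. Composing the two remarks produces a coefficient involving the \emph{inverse} function of $b(q)=(cq^2+c')m(q)$ evaluated at $C_2 s\log\frac1s$. The coefficient in the corollary, however, satisfies
$$\left(r^2|\log r|+\frac{2}{|\log r|}\right)m(r|\log r|)=\frac{(q^2+2)\,m(q)}{|\log r|}\bigg|_{q=r|\log r|},$$
i.e.\ it evaluates $b$ itself, not its inverse; with $m$ read literally as the tail function these expressions do not coincide, and neither you nor the paper specifies a relation between $r$ and $s$ under which they would. Your own consistency check exposes the problem without your noticing it: for $m(q)=e^{-cq^2}$ the literal coefficient tends to $0$ as $r\to 0$ (since $m(r|\log r|)\to m(0)$), which would assert something strictly stronger than a Poincar\'e inequality and is absurd, whereas the composed formula correctly gives a bounded rate, recovering Proposition \ref{pr:entropy-variance}. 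The reconciliation is that the ``$m$'' in the corollary's conclusion must be read as the weak-LSI rate (the inverse-type function from the remark after Proposition \ref{wlsiH}), in which case $(r^2|\log r|+2/|\log r|)\to 2$ as $r\to 0$ and the coefficient agrees, up to constants, with $\beta(C_2 s\log\frac1s)/(C_1\log\frac1s)$ at $r=s$; the corollary's statement is garbled on this point (as in its hypotheses, where $m$ is called ``increasing''). A complete proof must either derive the coefficient explicitly in the composed form, tracking $n=b^{-1}(C_2 s\log\frac1s)$ through the second remark, or flag this re-reading; as written, your closing claim that the orders ``combine into the stated closed form'' is unverified and, taken at face value, false.
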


\begin{remark}
The results in this section hold for any Hilbert norm on $\HH_\sigma$ including that used in 
Elworthy-Li \cite{Elworthy-Li-ibp}. It also works for a measure on the free path space $CM=\cup_{x_0\in M} C_{x_0}M$  in the following sense. If $\mu_{x_0}$ is a probability measure on $C_{x_0}M$ and $\nu$ a probability measure on $M$, we consider on $CM$  the measure $\mu=\int_M \mu_{x_0} d\nu$.
\end{remark}

\section{Poincar\'e Inequality on Hyperbolic Space}

Aida, \cite{Aida2000}, showed that, for $M$ the standard hyperbolic space, of constant negative curvature. We may assume that the curvature is $-1$. Take the gradient $\nabla$ to be that related to the Levi-Civita connection. 
\begin{equation}\label{Aida}
\int_{C_{x_0}H^n} f^2\log {f^2\over \log |f|^2_{L^2} } d\mu_{x_0,y_0}(\gamma)\leqslant  \int_{C_{x_0}H^n} C(\gamma)|\nabla f|^2 d\mu_{x_0,y_0}(\gamma)
\end{equation}
for $C(\gamma)=C_1(n)+C_2(n)\sup_{0\leqslant t\leqslant 1}d^2(\gamma_t, y_0)$. His method of proof is the Clark-Ocone formula approach. From an integration by parts formula he obtained the following  Clark-Ocone formula by the integration representation theorem:
 $$\E^{\mu_{x_0,y_0}}\{ F|\G_t\}=\E ^{\mu_{x_0,y_0}}F+\int_0^t\langle H_s(\gamma), dW_s\rangle,$$
 where $W_t$ is the anti-development of the Brownian bridge and 
 $$H(s, \gamma)=\E^{\mu_{x_0,y_0}}\{L(\gamma){d\over ds}{\nabla F(\gamma)(s)} | \G_s\}$$
 almost surely with respect to the product measure $dt\otimes \mu_{x_0,y_0}$. Here $\G_t$ is the filtration generated by $\F_t$ and the end point of the Brownian bridge.
 The main obstruction here is that $L$ is random  and careful estimates on $L$ leads to 
 (\ref{Aida}).

\begin{theorem}
Let $M=H^n$, the hyperbolic space of constant curvature $-1$. Then Poincar\'e inequality holds for the Brownian bridge measure $\mu_{x_0, x_0}$. 
\end{theorem}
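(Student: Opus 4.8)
The plan is to recognise Aida's inequality (\ref{Aida}) as an instance of the weighted inequality (\ref{wlsi5}) and then run the two--step reduction of Section 3. Specialising (\ref{Aida}) to $y_0=x_0$ gives, for every $f\in\D^{1,2}\cap L_\infty$,
$$\Ent(f^2)\leqslant \int_{\C_{x_0}H^n} u^2\,|\nabla f|^2\,d\mu_{x_0,x_0},\qquad u^2:=C_1(n)+C_2(n)\sup_{0\leqslant t\leqslant 1}d^2(\gamma_t,x_0),$$
which is exactly (\ref{wlsi5}) with weight $u$. Since $u\geqslant\sqrt{C_1(n)}>0$, the weight is strictly positive. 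It therefore suffices to verify the two structural hypotheses of Proposition \ref{wlsiH}, namely $|\nabla u|\leqslant a$ and $\int e^{Cu^2}\,d\mu_{x_0,x_0}<\infty$ for some $a,C>0$; then Proposition \ref{wlsiH} yields the weak logarithmic Sobolev inequality (\ref{wlsi6}) with $\beta(s)=C|\log s|$, and since this $\beta$ is precisely of the order $\log\frac1s$ demanded by Proposition \ref{pr:entropy-variance}, that proposition converts it into the desired Poincar\'e inequality.

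For the gradient bound, set $\psi(\gamma):=\sup_{0\leqslant t\leqslant 1}d^2(\gamma_t,x_0)$. A tangent vector $\paral_\cdot k_\cdot\in H_\gamma$ with $k\in L_0^{2,1}(T_{x_0}M)$ moves $\gamma_t$ along $\paral_t k_t$, so the directional derivative of $\gamma\mapsto d(\gamma_t,x_0)$ has absolute value $|\langle\nabla d(\cdot,x_0)(\gamma_t),\paral_t k_t\rangle|\leqslant |k_t|$, using $|\nabla d|\leqslant 1$ and that $\paral_t$ is an isometry. Since $|k_t|=|\int_0^t\dot k_s\,ds|\leqslant \|k\|_{H}$ uniformly in $t$, the envelope property of the supremum gives $|\nabla\psi|_{H_\gamma}\leqslant 2\sqrt{\psi}$ almost everywhere, and the chain rule then yields
$$|\nabla u|_{H_\gamma}=\frac{C_2(n)\,|\nabla\psi|_{H_\gamma}}{2\sqrt{C_1(n)+C_2(n)\psi}}\leqslant \frac{C_2(n)\sqrt{\psi}}{\sqrt{C_2(n)\psi}}=\sqrt{C_2(n)}=:a.$$
Making this rigorous requires knowing that the supremum functional $\psi$ lies in $\D^{1,2}$ and that the pointwise estimate holds $\mu_{x_0,x_0}$--a.e.; I would obtain this by approximating $\sup_t$ by cylindrical smooth substitutes (smoothed maxima over finite time--grids, or $L^p$ averages as $p\to\infty$), checking the gradient bound for each and passing to the limit in $\D^{1,2}$.

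It remains to establish the exponential integrability $\int e^{Cu^2}\,d\mu_{x_0,x_0}<\infty$ for some small $C>0$, equivalently a tail bound $\mu_{x_0,x_0}\big(\sup_{0\leqslant t\leqslant 1}d(\gamma_t,x_0)>r\big)\leqslant C' e^{-\kappa r^2}$. This is a Fernique--type estimate for the Brownian bridge on $H^n$ and is the genuinely analytic input of the argument; I would derive it from the Gaussian off--diagonal heat--kernel behaviour on hyperbolic space, $p_t(x,y)\sim (2\pi t)^{-n/2}e^{-d^2(x,y)/2t}\theta_y(x)^{-1/2}$, together with the pinned representation of $\mu_{x_0,x_0}$ through the densities $p_{T-t}$, noting that the $\theta^{-1/2}$ correction only helps and that the $e^{-d^2/2t}$ factor dominates the $\sinh^{n-1}$ volume growth. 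With both hypotheses verified, Proposition \ref{wlsiH} and then Proposition \ref{pr:entropy-variance} apply verbatim and the theorem follows.

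The hard part is this exponential integrability on $H^n$: the negative curvature makes geodesic balls grow exponentially and lets paths spread, so one must confirm that pinning the bridge at both endpoints still forces a sub--Gaussian tail for $\sup_t d(\gamma_t,x_0)$ in the distance variable, rather than merely a sub--exponential one, and then upgrade the fixed--time marginal tail to a tail for the supremum by a continuity or chaining estimate. A genuinely sub--Gaussian tail is essential, because only then does Proposition \ref{wlsiH} deliver $\beta(s)$ of the sharp order $\log\frac1s$; a heavier tail would produce a slower $\beta$ and, by the final Remark of Section 3, only a weak Poincar\'e inequality. The secondary technical point is the careful justification that $\psi\in\D^{1,2}$ with the almost--everywhere gradient estimate, since $\gamma\mapsto\sup_t d^2(\gamma_t,x_0)$ is only Lipschitz, not smooth, along the path.
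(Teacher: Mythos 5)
Your proposal is correct and follows essentially the same route as the paper: Aida's inequality (\ref{Aida}) is read as an instance of the weighted inequality (\ref{wlsi5}) with $u^2=C_1(n)+C_2(n)\sup_{0\leqslant t\leqslant 1}d^2(\gamma_t,x_0)$, the hypotheses of Proposition \ref{wlsiH} are verified, and Proposition \ref{pr:entropy-variance} then converts the resulting weak logarithmic Sobolev inequality with $\beta(s)=C|\log s|$ into the Poincar\'e inequality. The only divergence is in the Fernique-type estimate $\int e^{Cu^2}\,d\mu_{x_0,x_0}<\infty$, which the paper's one-line proof obtains from time reversal and symmetry of the Brownian bridge together with the concentration property of the Brownian motion measure, whereas you propose direct Gaussian heat-kernel bounds; your write-up (including the explicit verification of $|\nabla u|\leqslant\sqrt{C_2(n)}$ via the Lipschitz property of the distance, which the paper leaves implicit) supplies details the paper omits.
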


\begin{proof}
Just note that by  the time reversal of the Brownian bridge and its symmetric property and  the concentration  property of the Brownian motion measure
$$\int_{\C_{x_0}M}  e^ {C d^2(\sigma, y_0) }d\mu_{x_0, y_0} (\sigma)<\infty.$$
Hence by Proposition \ref{wlsi5} and Proposition \ref{pr:entropy-variance} and (\ref{Aida}), we finish the proof.
\end{proof}
\begin{remark}
Aida has shown that inequality (\ref{wlsi5}) holds for the loop space and each homotopy class of  the free loop space over a  compact Riemannian manifold of constant negative curvature. Our discussion earlier shows that Poincar\'e inequality holds in this case.\end{remark}
A compact Riemannian manifold of constant negative curvature is of the form $M=G/H^n$ where $G$ is a discrete subgroup of the isometry group of the hyperbolic space.
The free loop space is the collection of all loops.   See Aida \cite{Aida2000} for precise formulation.

\end{document}